\newcommand\NN{\mathbb{N}}
\newcommand\RR{\mathbb{R}}
\DeclareMathOperator\sP{P}   %probability, treated as an operator
\newcommand{\rP}{\mathrm{P}} %probability, when spacing isn't a worry
\newtheorem{theorem}{Theorem}[section]
\newtheorem{definition}[theorem]{Definition}
\newtheorem{lemma}[theorem]{Lemma}
\newtheorem{remark}[theorem]{Remark}
\newcommand{\address}{Address: Department of Mathematics, University of North Texas, 1155 Union Circle \#311430, Denton, TX 76203-5017, USA; E-mail: allaart@unt.edu, JoseIslas@my.unt.edu}
\title{A sharp lower bound for choosing the maximum of an independent sequence}
\author{Pieter C. Allaart and Jos\'e A. Islas\footnote{\address}}
\begin{document}

\maketitle

\begin{abstract}
This paper considers a variation of the full-information secretary problem where the random variables to be observed are independent but not necessary identically distributed. The main result is a sharp lower bound for the optimal win probability. Precisely, if $X_1,\dots,X_n$ are independent random variables with known continuous distributions and $V_n(X_1,\dots,X_n):=\sup_\tau \sP(X_\tau=M_n)$, where $M_n:=\max\{X_1,\dots,X_n\}$ and the supremum is over all stopping times adapted to $X_1,\dots,X_n$, then
$$V_n(X_1,\dots,X_n)\geq \left(1-\frac{1}{n}\right)^{n-1},$$
and this bound is attained. The method of proof consists in reducing the problem to that of a sequence of random variables taking at most two possible values, and then applying Bruss' sum-the-odds theorem (2000). In order to obtain a sharp bound for each $n$, we improve Bruss' lower bound (2003) for the sum-the-odds problem.

\bigskip
{\it AMS 2010 subject classification}: 60G40 (primary)

\bigskip
{\it Key words and phrases}: Choosing the maximum, Sum-the-odds theorem, Stopping time
\end{abstract}

\section{Introduction}

In the classical secretary problem or best-choice problem, a known number $n$ of applicants for a single position are interviewed one by one in a random order, and after each interview a manager must decide whether or not to hire the applicant just interviewed, based solely on his or her relative rank among the applicants interviewed thus far. The objective is to hire the best applicant. It is well known that under the optimal strategy, the probability of success decreases monotonically to $1/e\approx .3679$ as $n\to\infty$. An entertaining account of the history of the secretary problem and some of its many variations can be found in Ferguson \cite{Ferguson}; see also Samuels \cite{Samuels}.

One natural extension is the full-information best-choice problem, in which each applicant, independent of the others, can be assigned a numerical score whose distribution is known in advance, and the objective is to hire the applicant with the highest score. The mathematical framework for this problem is as follows. Here and throughout this paper, let $X_1,\dots,X_n$ be independent random variables with known continuous distributions, let $M_n:=\max\{X_1,\dots,X_n\}$, and define
\begin{equation}
V_n(X_1,\dots,X_n):=\sup_\tau \sP(X_\tau=M_n),
\label{eq:optimal-stopping-problem}
\end{equation}
where the supremum is over all stopping times adapted to the natural filtration of $X_1,\dots,X_n$. Gilbert and Mosteller \cite{GilMost} solved this optimal stopping problem in the case when $X_1,\dots,X_n$ are independent and identically distributed (i.i.d.). They showed that the optimal win probability is independent of the distribution of the $X_i$'s, and decreases monotonically to $.5802$.

It is natural to ask how much lower the optimal win probability can be if we drop the assumption that the random variables are identically distributed. In this case, it should be fairly clear that the optimal win probability depends on the distributions of $X_1,\dots,X_n$, and we aim to find a sharp lower bound. Our main result is

\begin{theorem} \label{thm:main}
For each $n\in\NN$, we have
\begin{equation}
V_n(X_1,\dots,X_n)\geq \left(1-\frac{1}{n}\right)^{n-1},
\label{eq:main-lower-bound}
\end{equation}
and this bound is attained.
\end{theorem}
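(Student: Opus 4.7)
My plan follows the three-step roadmap sketched in the abstract: reduce to distributions taking at most two values, recast as an instance of Bruss' sum-the-odds problem, and establish a sharp lower bound for the sum-the-odds value that improves Bruss' $1/e$ bound (2003) to $(1-1/n)^{n-1}$ for each finite $n$.

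For the reduction, I would argue that, without loss of generality, each $X_k$ may be assumed supported on two atoms $a_k<b_k$ with masses $1-p_k$ and $p_k$. One proceeds coordinate by coordinate: with the laws of $X_j$ for $j\neq k$ fixed, write $V_n$ via the Bellman recursion as the supremum over stopping rules of an integral against $F_k$ of a payoff determined by the remaining distributions, and use an extreme-point or Jensen-type comparison to replace $F_k$ by a two-atom law without increasing $V_n$. After iterating, one chooses the atom locations so that the correspondence with sum-the-odds becomes exact: setting $I_k:=\mathbf{1}_{\{X_k=b_k\}}$, the $I_k$ are independent Bernoulli$(p_k)$, and the atoms can be arranged (with a worst-case choice of the $a_k$'s absorbing the all-zero contribution) so that $\{X_\tau=M_n\}$ reduces to ``$\tau$ is the index of the last $1$ in $I_1,\ldots,I_n$.'' Bruss' theorem then identifies the optimal win probability as
\[
Q(p_1,\ldots,p_n)=\Bigl(\prod_{k=s}^{n}(1-p_k)\Bigr)\sum_{k=s}^{n}\frac{p_k}{1-p_k},
\]
where $s$ is the smallest index with $\sum_{k=s}^{n} p_k/(1-p_k)\leq 1$.

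For the sharp bound---the heart of the proof---I would show $Q(p_1,\ldots,p_n)\geq (1-1/n)^{n-1}$ on the domain produced by the reduction, with equality at the balanced vector $p_1=\cdots=p_n=1/n$ (where $s=2$ and $Q=((n-1)/n)^{n-1}$). The plan is to fix the cutoff $s$, use Lagrange multipliers (or a Schur-concavity argument on the simplex of odds) to show that on each regime the minimizer is balanced, and then check that the minima across the discrete jumps of $s$ agree. Combined with the reduction, this gives \eqref{eq:main-lower-bound}, and the balanced two-atom family (smoothed to a continuous distribution) attains the bound. The principal obstacle is this last step: because $s$ depends discontinuously on $(p_1,\ldots,p_n)$, the objective $Q$ is only piecewise smooth, so the minimization must be carried out regime by regime, with careful comparison across boundary values---it is precisely this piecewise analysis that extracts the sharp constant $(1-1/n)^{n-1}$ rather than Bruss' weaker asymptotic $1/e$.
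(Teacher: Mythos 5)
Your high-level roadmap matches the paper's, but there are two genuine gaps, one in each half of the argument.

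First, the reduction. You propose to replace each $F_k$ by a two-atom law via ``an extreme-point or Jensen-type comparison.'' But for a fixed stopping rule the win probability is linear in each marginal, so $V_n=\sup_\tau \sP(X_\tau=M_n)$ is \emph{convex} in $F_k$; extreme-point reasoning therefore bounds $V_n$ from \emph{above} by its values at degenerate laws, which is the wrong direction for a lower bound (the minimum of a convex function over a simplex of measures need not sit at an atom). The paper's reduction is not a generic convexity argument: it fixes the first critical value $x_1^*$ of the optimal rule, collapses each $X_i$ to the two atoms $\{X_i>x_1^*\}$ versus $\{X_i\le x_1^*\}$, chooses the upper atom locations $a_i\le c_i$ via an intermediate-value construction so that a certain conditional expectation is preserved, and then proves $V_n^{(k)}\ge V_n^{(k+1)}$ by a backward induction exploiting the monotonicity of the stop-value $U_j$ and continue-value $W_j$. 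None of this is recoverable from the mechanism you name, and it is the technical heart of the proof.

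Second, the correspondence with sum-the-odds is not exact in the way you claim, and this matters precisely in the regime that your optimization step cannot handle. In the reduced (V-)sequence the first variable is deterministic and is always a candidate, so the gambler has the additional option of stopping immediately, which wins iff every later indicator is $0$; the event $\{X_\tau=M_n\}$ is \emph{not} simply ``$\tau$ is the last $1$.'' Correspondingly, when $\sum_k r_k<1$ Bruss' value $Q$ can be made arbitrarily small (send all $p_k\to 0$), so your claimed inequality $Q\ge(1-1/n)^{n-1}$ is false on part of the domain produced by the reduction. The paper splits into two cases: if $\sum r_k\ge 1$ it applies the sharpened odds bound (proved by AM--GM on $\prod(1+r_j)$, not Lagrange multipliers, and with no piecewise analysis over the cutoff $s$ needed); if $\sum r_k\le 1$ it uses the stop-immediately strategy, whose win probability $\prod(1+r_i)^{-1}\ge(1-1/n)^{n-1}$ again by AM--GM. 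Your proposal omits this second case entirely. (Minor: your definition of $s$ as the smallest index with $\sum_{k=s}^n r_k\le 1$ is also not Bruss' cutoff, which is essentially the last index from which the remaining odds still sum to at least $1$.)
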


\begin{remark}
{\rm
The bound in the theorem decreases to $1/e\approx .3679$ as $n\to\infty$. This is quite a bit smaller than the limiting win probability $.5802$ in the i.i.d. case, which should not come as a surprise. (Intuitively, when the distributions become progressively more spread out -- as they do in the extremal case; see Lemma \ref{lem:V-sequence-bound} below -- there is more uncertainty about the future than when all distributions are identical, resulting in a lower optimal win probability.) Comparing our bound with the classical secretary problem, we see moreover that, in the worst case and for large $n$, a gambler who has full information can do no better than a gambler who observes only relative ranks.
}
\end{remark}

\begin{remark}
{\rm
One might ask whether the continuity assumption about the $X_i$'s is needed in Theorem \ref{thm:main}. The way our objective function is defined in \eqref{eq:optimal-stopping-problem} implies that we win if we stop with a value that is at least tied for the overall maximum. Thus, it seems that the possibility of ties should only make it easier to win and consequently, the lower bound \eqref{eq:main-lower-bound} should continue to hold when the random variables $X_1,\dots,X_n$ are permitted to have atoms. However, it is not clear to us how to prove this formally.
%While it seems plausible that the inequality in Theorem \ref{thm:main} holds also without the assumption of continuity, we have not yet been able to prove this.
}
\end{remark}

Our method of proof consists of two steps. First, we construct a sequence $Y_1,\dots,Y_n$ of simple random variables (each of which, in fact, takes on at most two possible values) such that $V_n(X_1,\dots,X_n)\geq V_n(Y_1,\dots,Y_n)$. Next, we use Bruss' ``sum-the-odds" theorem \cite{Bruss1} to obtain an explicit expression for $V_n(Y_1,\dots,Y_n)$, which we then minimize by establishing a lower bound for the ``sum-the-odds" problem which improves on that given by Bruss \cite{Bruss2}. This lower bound, though not difficult to obtain, is interesting in its own right.

We have not found a more direct way to prove the main result, though we do not rule out the possibility that one exists. However, we believe that the technique of proof used here is of independent interest and may be applicable to other optimal stopping problems of a similar nature.

\section{Bruss' ``sum-the-odds" theorem}

Bruss \cite{Bruss1} considered the problem of stopping at the last success in a sequence of independent Bernoulli trials. Specifically, let $A_1,\dots,A_n$ be independent events with $\rP(A_i)=p_i$, $i=1,\dots,n$, and let $I_i:=I_{A_i}$, the indicator random variable corresponding to $A_i$, for $i=1,\dots,n$. If we think of the value 1 as representing a success and 0 as representing a failure, the problem of stopping at the last success comes down to finding a stopping time $\tau$ that maximizes $\rP(I_\tau=1,I_{\tau+1}=\dots=I_n=0)$. 

\begin{theorem}[Sum-the-odds theorem, \cite{Bruss1}] \label{thm:Bruss-odds}
Let $I_1, I_2,\dots,I_n$ be a sequence of independent indicator random variables with $p_j=E(I_j)$. Let $q_j=1-p_j$ and $r_j=p_j/q_j$. 
Consider the problem of stopping at the last success; that is, the optimal stopping problem
\begin{equation*}
v:=v(p_1,\dots,p_n):=\sup_\tau \sP(I_\tau=1,I_{\tau+1}=\dots=I_n=0).
\end{equation*}
Then the optimal rule is to stop on the first index (if any) $k$ with $I_k=1$ and $k\geq s$, where 
\begin{equation*}
s:=\sup \left\{1,\sup\left\{1\leq k \leq n: \sum_{j=k}^n r_j \geq 1 \right\} \right\},
\end{equation*}
with $\sup\{\emptyset\}:=-\infty$. Moreover, the optimal win probability is given by
\begin{equation*}
v=v(p_1,\dots,p_{n})=\left(\prod_{j=s}^n q_j\right) \left(\sum_{k=s}^n r_k\right).
\end{equation*}
\end{theorem}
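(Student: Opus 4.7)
The plan is to prove the theorem by combining a direct calculation of the win probability under each ``threshold policy'' with a backward induction on the Bellman equation that rules out non-threshold policies. Write $S_k := \sum_{j=k}^n r_j$ and $w_k := \prod_{j=k+1}^n q_j$ (with $w_n = 1$), and let $V_k$ denote the optimal win probability for the sub-problem on $I_k,\dots,I_n$; the quantity of interest is $V_1$. Since continuation is forced whenever $I_k = 0$, dynamic programming gives $V_{n+1} = 0$ and
$$V_k = p_k \max\{w_k,\ V_{k+1}\} + q_k V_{k+1} \qquad (1 \leq k \leq n),$$
so the optimal action at time $k$, given $I_k = 1$, is to stop iff $w_k \geq V_{k+1}$.

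First I would evaluate the win probability of the threshold policy ``stop at the first index $k \geq t$ with $I_k = 1$''. Summing the disjoint probabilities $\rP(I_t = \dots = I_{j-1} = 0,\ I_j = 1,\ I_{j+1} = \dots = I_n = 0)$ for $j \in \{t,\dots,n\}$ and pulling out $\prod_{j=t}^n q_j$ yields the clean expression $f(t) := \bigl(\prod_{j=t}^n q_j\bigr) S_t$. A one-line computation then gives
$$f(t) - f(t+1) = p_t \Bigl(\prod_{j=t+1}^n q_j\Bigr)(1 - S_{t+1}), \qquad f(n+1):=0,$$
which shows $f$ is non-decreasing for $t \leq s$ and non-increasing for $t \geq s$, with the peak exactly at the $s$ specified in the theorem. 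Hence among threshold policies, $t = s$ is optimal and delivers $v = f(s)$, matching the formula in the theorem.

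To show that no non-threshold policy does better, I would prove by backward induction that $V_k = f(\max\{k, s\})$ for every $k$. For $k \geq s$, the inductive hypothesis $V_{k+1} = f(k+1) = w_k S_{k+1}$ reduces the stopping condition $w_k \geq V_{k+1}$ to $S_{k+1} \leq 1$, which holds by the definition of $s$; then $V_k = w_k(p_k + q_k S_{k+1}) = \bigl(\prod_{j=k}^n q_j\bigr) S_k = f(k)$, using $r_k + S_{k+1} = S_k$. For $k < s$ one has $V_{k+1} = f(s)$, and the reverse inequality $w_k \leq V_{k+1}$ reduces to $\prod_{j=k+1}^{s-1} q_j \leq S_s$, which is immediate from $S_s \geq 1$ and $q_j \leq 1$; continuation is therefore optimal and $V_k = V_{k+1} = f(s)$, matching the claim.

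The delicate point will be the edge case $S_1 < 1$: the inner supremum defining $s$ is then empty and the convention sets $s = 1$, so $S_{k+1} \leq S_1 < 1$ for every $k \geq 1$, and the ``$k \geq s$'' branch applies throughout, still giving $V_1 = f(1)$. All remaining steps amount to routine algebra, so the main conceptual content is the unimodality identity for $f(t)-f(t+1)$, which simultaneously pinpoints the optimal threshold and verifies the one-step-look-ahead condition.
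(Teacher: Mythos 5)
The paper does not actually prove this theorem: it is imported verbatim from Bruss (2000) as a known result, so there is no in-paper proof to compare against. Your argument is a correct, self-contained proof. The threshold-rule value $f(t)=\bigl(\prod_{j=t}^n q_j\bigr)\sum_{j=t}^n r_j$, the difference identity $f(t)-f(t+1)=p_t\bigl(\prod_{j=t+1}^n q_j\bigr)(1-S_{t+1})$ (which gives unimodality with peak at $s$), and the backward induction establishing $V_k=f(\max\{k,s\})$ all check out, including the edge case $S_1<1$ where the convention forces $s=1$, and the case $k<s$ where the needed inequality $\prod_{j=k+1}^{s-1}q_j\leq S_s$ follows from $S_s\geq 1$. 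The only step you pass over quickly is the reduction of the objective $\sup_\tau \rP(I_\tau=1,I_{\tau+1}=\dots=I_n=0)$ to the Bellman recursion $V_k=p_k\max\{w_k,V_{k+1}\}+q_kV_{k+1}$: this rests on the observations that the conditional win probability upon stopping at $k$ is $\rE\bigl[\prod_{j>k}(1-I_j)\mid I_1,\dots,I_k\bigr]=w_k$ by independence, and that the continuation value is history-independent for the same reason; both are routine for a finite horizon but deserve a sentence. (Implicitly you also assume $p_j<1$ for all $j$, which the theorem's definition of $r_j$ already requires.) Your route is essentially the standard dynamic-programming proof of the odds theorem, close in spirit to Bruss's original argument, which likewise combines the explicit value of threshold rules with a one-stage-look-ahead verification of optimality.
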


To see how this may be applied to the best-choice problem, let $A_i$ be the event that the $i$th applicant is best so far. In the classical secretary problem, where only relative ranks are observed and all $n!$ orderings of the applicants are equally likely, the events $A_1,\dots,A_n$ are independent, so Bruss' theorem applies to give the optimal stopping rule. However, in the full-information case we are considering here, we have $A_i=\{X_i\geq \max\{X_1,\dots,X_{i-1}\}\}$ for $i=1,\dots,n$, and these events are generally {\em not} independent when $X_1,\dots,X_n$ are not identically distributed. Consider, for instance, the case when $n=3$, $X_1\equiv 1$, $X_2=0$ or $3$ each with probability $1/2$, and $X_3=0,2$ or $4$ each with probability $1/3$; one checks easily that $A_2$ and $A_3$ are not independent.

To overcome this issue, we will reduce the problem to that of a sequence of random variables which is simple enough that the sum-the-odds theorem {\em does} apply to it; see the next section. But first we need a sharp lower bound for the optimal win probability $v(p_1,\dots,p_n)$. 

Bruss \cite{Bruss2} showed that if $\sum_{k=1}^n r_k\geq 1$, then $v(p_1,\dots,p_n)>e^{-1}$, and this is the best {\em uniform} lower bound. However, for fixed $n$ we can do somewhat better. Let $b_n$ denote the lower bound in Theorem \ref{thm:main}, that is,
\begin{equation*}
b_n:=\left(1-\frac{1}{n}\right)^{n-1}.
\end{equation*}

\begin{theorem} \label{thm:odds-bound}
If $\sum_{k=1}^n r_k\geq 1$, then 
\begin{equation*}
v=v(p_1,\dots,p_n)\geq b_{n+1}=\left(1-\frac{1}{n+1}\right)^n.
\end{equation*}
The bound is attained when $p_1=p_2=\dots=p_n=1/(n+1)$.
\end{theorem}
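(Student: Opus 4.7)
The plan is to combine Bruss' formula from Theorem~\ref{thm:Bruss-odds} with two applications of the concavity of $x \mapsto \log(1+x)$ (Jensen's inequality), thereby reducing the minimization of $v$ to a single-variable calculus problem. Fix $p_1,\ldots,p_n$ with $\sum_{k=1}^n r_k \geq 1$, let $s$ be the Bruss threshold, set $m := n-s+1$, and put $R := \sum_{j=s}^n r_j$ and $T := \sum_{j=s+1}^n r_j$. The hypothesis together with the maximality of $s$ yields $R \geq 1$ and (when $s<n$) $T<1$. Using $q_j = 1/(1+r_j)$, Bruss' formula reads $v = R/\prod_{j=s}^n(1+r_j)$, and Jensen applied to $r_{s+1},\ldots,r_n$ (for $m\geq 2$) gives $\prod_{j>s}(1+r_j) \leq (1+T/(m-1))^{m-1}$. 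Writing $y := r_s$, this simplifies to
\[
v \;\geq\; \frac{y+T}{(1+y)\bigl(1+T/(m-1)\bigr)^{m-1}}.
\]

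For fixed $T \in [0,1)$, the map $y \mapsto (y+T)/(1+y)$ has derivative $(1-T)/(1+y)^2 > 0$, so it is minimized on $\{y\geq 0 : y+T\geq 1\}$ at $y = 1-T$, giving
\[
v \;\geq\; \frac{1}{(2-T)\bigl(1+T/(m-1)\bigr)^{m-1}}.
\]
A routine one-variable optimization (set $d/dT$ to zero) shows the denominator attains its maximum on $[0,1)$ at $T^\star = (m-1)/m$, where both factors equal $(m+1)/m$, producing the value $((m+1)/m)^m$. Therefore $v \geq (m/(m+1))^m = b_{m+1}$. The edge case $m=1$ (i.e., $s=n$ with $r_n \geq 1$) is immediate: $v = r_n/(1+r_n) \geq 1/2 = b_2$.

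To conclude, I would show that $b_k$ is decreasing in $k$, so $b_{m+1} \geq b_{n+1}$ for all $m \leq n$. By Bernoulli's inequality,
\[
\frac{b_k}{b_{k+1}} = \frac{k+1}{k}\bigl(1-1/k^2\bigr)^{k-1} \geq \frac{k+1}{k}\Bigl(1 - \frac{k-1}{k^2}\Bigr) = \frac{k^3+1}{k^3} > 1.
\]
Equality in the theorem is then achieved at $p_1 = \cdots = p_n = 1/(n+1)$, since this forces $r_j = 1/n$, $s=1$, $m=n$, and $v = (n/(n+1))^n$. The main technical obstacle is handling the constraint $T<1$ forced by the maximality of $s$: a naive application of Jensen to all $m$ variables at once yields only $v \geq R/(1+R/m)^m$, which tends to $0$ as $R \to \infty$ and is therefore not tight. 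This is what compels us to peel off $r_s$ and exploit the fact that $(y+T)/(1+y)$ is increasing in $y$ whenever $T<1$.
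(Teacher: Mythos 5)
Your proposal is correct and follows essentially the same route as the paper's proof: after translating notation (your $m$, $T$, $y=r_s$ are the paper's $t$, $R_{t-1}$, $r_t$ under its reversed indexing), both arguments apply AM--GM to the factors $(1+r_j)$ for $j>s$, reduce to the monotone-in-$y$ bound at $y=1-T$, and maximize $(2-T)\left(1+\frac{T}{m-1}\right)^{m-1}$ at $T^\star=(m-1)/m$ to get $b_{m+1}\geq b_{n+1}$, with the same $m=1$ edge case. The only cosmetic difference is your Bernoulli-inequality proof that $(b_k)$ decreases, where the paper instead notes that $b_{k+1}^{-1}=\left(1+\frac{1}{k}\right)^{k}$ is increasing.
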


\begin{lemma} \label{lem:decreasing-bounds}
The sequence $(b_n)$ is strictly decreasing, and $\lim_{n\to\infty}b_n=e^{-1}$.
\end{lemma}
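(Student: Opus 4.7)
The plan is to rewrite $b_n$ in a form that connects it to the classical sequence $(1+1/m)^m$, so that both the monotonicity and the limit fall out of well-known facts. Explicitly, I would observe that
\[
b_n = \left(\frac{n-1}{n}\right)^{n-1} = \frac{1}{\left(1+\tfrac{1}{n-1}\right)^{n-1}} = \frac{1}{c_{n-1}},
\]
where $c_m := (1+1/m)^m$ for $m \geq 1$. Both assertions of the lemma then reduce to showing that $c_m$ is strictly increasing in $m$ with $\lim_{m\to\infty} c_m = e$, because reciprocation turns strict increase to $e$ into strict decrease to $1/e$.

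For the limit, $c_{n-1} \to e$ is the standard definition of $e$, and a small bookkeeping remark handles the shift from $m=n-1$ to $n$. For the monotonicity, I would prove that the function $f(x) := x\log(1+1/x)$ is strictly increasing on $(0,\infty)$, which yields $c_m = e^{f(m)}$ strictly increasing in $m$. Differentiating gives
\[
f'(x) = \log\!\left(1+\tfrac{1}{x}\right) - \frac{1}{x+1},
\]
so the claim reduces to the elementary inequality $\log(1+t) > t/(1+t)$ for $t>0$, equivalently $(1+t)\log(1+t) > t$. Setting $g(t) := (1+t)\log(1+t) - t$, one has $g(0)=0$ and $g'(t) = \log(1+t) > 0$ for $t>0$, so $g > 0$ on $(0,\infty)$, giving $f'>0$.

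There is no real obstacle here; the only choice is whether to invoke the monotonicity of $(1+1/m)^m$ as a known fact (for instance via the AM--GM proof) or to include the short calculus argument above. I would include the calculus argument for self-containedness, since it is only a few lines. Combining the two pieces gives $b_n = 1/c_{n-1}$ strictly decreasing with limit $1/e$, as required.
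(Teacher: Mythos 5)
Your proposal is correct and follows essentially the same route as the paper: both rewrite $b_n^{-1}$ as $\left(1+\frac{1}{n-1}\right)^{n-1}$ and reduce the claim to the classical fact that $(1+1/m)^m$ increases strictly to $e$. The only difference is that you supply a short calculus proof of that fact, whereas the paper simply cites it as known.
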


\begin{proof}
This follows since $b_{n+1}^{-1}=\left(1+\frac{1}{n}\right)^n$, which increases in $n$ and has limit $e$.
\end{proof}

\begin{proof}[Proof of Theorem \ref{thm:odds-bound}]
We initially follow Bruss' proof \cite{Bruss2} of the bound $e^{-1}$.
For convenience, we reindex the $p_k$'s, and hence the $q_k$'s and $r_k$'s, by redefining $p_i:=p_{n-i+1}$, $q_i:=q_{n-i+1}$ and $r_i:=r_{n-i+1}$, for $i=1,2,\dots,n$. In this new notation, let $R_k:=r_1+\dots+r_k$ and let $t:=\inf\{k:R_k\geq 1\}$. Then we can write $v$ as
\begin{equation}
v=R_t \prod_{k=1}^t q_k.
\label{eq:v-formula}
\end{equation}
%and we must show that $v\geq b_{n+1}$.

If $t=1$, then $r_1\geq 1$, and hence $v=r_1 q_1=p_1\geq 1/2$. Thus, the statement is true for $t=1$. Assume therefore $t\geq 2$. Use $q_j=(1+r_j)^{-1}$ to rewrite $v$ in the form
\begin{equation} \label{eq:v-expanded}
v=\frac{R_t}{(1+r_t)\prod_{j=1}^{t-1}(1+r_j)}.
\end{equation}
%We need to minimize this expression subject to
%\begin{equation}
%R_{t-1}=\sum_{k=1}^{t-1}r_k\leq 1.
%\end{equation}
Using the geometric mean-arithmetic mean inequality we have
\begin{equation*}
\left(\prod_{j=1}^{t-1}(1+r_j) \right)^{\frac{1}{t-1}} \leq \frac{\sum_{j=1}^{t-1} (1+r_j)}{t-1}=1+\frac{R_{t-1}}{t-1},
\end{equation*}
%and equality is attained when $r_1=\dots=r_{t-1}=\frac{R_{t-1}}{t-1}$. 
and substituting this into \eqref{eq:v-expanded} gives
\begin{equation*}
v\geq \frac{R_t}{(1+r_t)\left(1+\frac{R_{t-1}}{t-1}\right)^{t-1}}.
\end{equation*}
We now refine the analysis from the second part of Bruss' proof. It is simpler to consider $v^{-1}$:
\begin{equation*}
v^{-1}\leq \frac{(1+r_t)\left(1+\frac{R_{t-1}}{t-1}\right)^{t-1}}{R_{t-1}+r_t}=g_t(R_{t-1},r_t),
\end{equation*}
where
\begin{equation*}
g_t(x,y):=\frac{1+y}{x+y}\left(1+\frac{x}{t-1}\right)^{t-1}.
\end{equation*}
In view of
\begin{equation*}
R_{t-1}=\sum_{k=1}^{t-1}r_k<1\leq R_t=R_{t-1}+r_t,
\end{equation*}
we need to maximize $g_t(x,y)$ over $0\leq x <1$, $1-x\leq y$. For fixed $x$ in this range, $g_t(x,y)$ is clearly decreasing in $y$, so that
%\begin{equation}
%\frac{d}{dy}g_t(x,y)= \frac{x-1}{(x+y)^2}\left(1+\frac{x}{t-1}\right)^{t-1}\leq 0,
%\end{equation}
\begin{equation*}
g_t(x,y)\leq g_t(x,1-x)=(2-x)\left(1+\frac{x}{t-1}\right)^{t-1}.
\end{equation*}
Elementary calculus shows that this function is maximized at $x=x^*:=(t-1)/t$, and so
%Calculate 
%\begin{align*}
%\frac{d}{dx}g_t(x,1-x)&=\left(1-x-\frac{x}{t-1}\right)\left(1+\frac{x}{t-1}\right)^{t-2}\\
%&=0 \Leftrightarrow x=x_{\max}:=\frac{t-1}{t}.
%\end{align*}
\begin{equation*}
v^{-1}\leq g_t(x^*,1-x^*)=\left(\frac{t+1}{t}\right)^t.
\end{equation*}
Hence,
\begin{equation*}
v \geq \left(\frac{t}{t+1}\right)^t=b_{t+1}\geq b_{n+1},
\end{equation*}
where the last inequality follows from Lemma \ref{lem:decreasing-bounds}. When $p_1=p_2=\dots=p_n=1/(n+1)$, we have $q_k=n/(n+1)$ and $r_k=1/n$ for each $k$, so that $t=n$ and $R_t=1$, and \eqref{eq:v-formula} yields $v=b_{n+1}$.
\end{proof}

\section{Proof of the main theorem}

We first introduce some notation and terminology.
For $j=1,\dots,n$, let $F_j$ denote the distribution function of $X_j$. By our assumption, each $F_j$ is continuous.
For $k=1,\dots,n$, we shall call $X_k$ a {\em candidate} if $X_k=\max\{X_1,\dots,X_k\}$. If we observe a candidate $X_k=x$ and we stop, we win if and only if none of the future observations $X_{k+1},\dots,X_n$ exceeds $x$. This happens with probability
\begin{equation}
U_k(x):=\sP(X_{k+1}\leq x,\dots,X_n\leq x)=\prod_{j=k+1}^n F_j(x).
\label{eq:U-def}
\end{equation}
On the other hand, if we have observed $X_1=x_1,\dots,X_k=x_k$ and we continue, we win if and only if we stop at some time $k<\tau\leq n$ and $X_\tau=\max\{m,X_{k+1},\dots,X_n\}$, where $m:=\max\{x_1,\dots,x_k\}$. Thus, the optimal win probability if we continue is
\begin{align*}
W_k(m):&=\sup_{k<\tau\leq n} \sP(X_\tau=M_n|\max\{X_1,\dots,X_k\}=m)\\
&=\sup_{k<\tau \leq n} \sP\left(X_{\tau}\geq \max\{m,X_{k+1},\dots,X_n\} \right).
\end{align*}
It is straightforward to verify, by conditioning on the value of $X_{k+1}$, the recursion
\begin{equation}
W_k(m)=F_{k+1}(m) W_{k+1}(m)+\int_m^{\infty} \max\{U_{k+1}(x),W_{k+1}(x)\}dF_{k+1}(x),
\label{eq:W-recursion}
\end{equation}
for $k=1,\dots,n-1$, since an observation smaller than $m$ at time $k+1$ would force us to continue.

The following definition identifies a class of sequences of random variables which will turn out to be extremal for our problem.

\begin{definition} \label{def:V-sequence}
A sequence $Y_1,\dots,Y_n$ of independent random variables will be called a {\em V-sequence} if there exist constants $a_1,a_2,\dots,a_n$ and $b_2,\dots,b_n$ with $a_1<a_2<\dots<a_n$ and $a_1>b_2>b_3>\dots>b_n$, and
\begin{enumerate}[(i)]
\item $\rP(Y_1=a_1)=1$, and
\item $\rP(Y_i=a_i\ \mbox{or}\ b_i)=1$ for $i=2,\dots,n$.
\end{enumerate}
%\begin{equation*}
%X_i:=\begin{cases}
%a_i, & \mbox{with probability } p_i,\\
%b_i, & \mbox{with probability } q_i:=1-p_i,
%\end{cases}
%\end{equation*}
%where $a_1<a_2<\dots<a_n$ and $a_1>b_2>b_3>\dots>b_n$.
\end{definition}

We will show first that, if $Y_1,\dots,Y_n$ is any $V$-sequence, then the optimal win probability $V_n(Y_1,\dots,Y_n)$ for this sequence is greater than or equal to the bound in Theorem \ref{thm:main}. Then, for a given sequence $X_1,\dots,X_n$ of independent continuous random variables, we will construct a specific $V$-sequence $Y_1,\dots,Y_n$ (whose distributions depend on those of $X_1,\dots,X_n$, of course) such that $V_n(X_1,\dots,X_n)\geq V_n(Y_1,\dots,Y_n)$. This will clearly establish the lower bound of Theorem \ref{thm:main}.

\begin{lemma} \label{lem:V-sequence-bound}
If $Y_1,\dots,Y_n$ is a V-sequence, then
\begin{equation*}
V_{n}(Y_1,\dots,Y_n)\geq \left(1-\frac{1}{n}\right)^{n-1},
\end{equation*}
and this bound is attained.
\end{lemma}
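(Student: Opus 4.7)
The plan is to translate the V-sequence problem into Bruss's last-success setting and then decompose by whether to stop at the deterministic first observation. Set $I_k := \rI_{\{Y_k = a_k\}}$ for $k = 1, \dots, n$, and let $p_k := \sP(Y_k = a_k)$, $q_k := 1 - p_k$, and $r_k := p_k/q_k$. Then $I_1 \equiv 1$ (since $Y_1 = a_1$ a.s.) and $I_2, \dots, I_n$ are independent Bernoulli$(p_k)$ variables. The orderings $a_1 < a_2 < \dots < a_n$ together with $b_j < a_1$ for all $j \geq 2$ guarantee that for any stopping time $\tau$, one has $Y_\tau = M_n$ if and only if $I_\tau = 1$ and $I_{\tau+1} = \dots = I_n = 0$. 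Hence $V_n(Y_1, \dots, Y_n)$ is precisely the value of the last-success problem for $(I_1, \dots, I_n)$.

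Because $Y_1$ is deterministic, the decision at time 1 is itself deterministic, and a one-step analysis yields
\begin{equation*}
V_n(Y_1, \dots, Y_n) = \max\!\left\{\prod_{j=2}^n q_j,\ v(p_2, \dots, p_n)\right\},
\end{equation*}
the first entry being the win probability from stopping at time 1 and the second the value obtained by declining $Y_1$ and applying Theorem~\ref{thm:Bruss-odds} to $(I_2, \dots, I_n)$. One cannot apply Theorem~\ref{thm:Bruss-odds} to the full sequence directly, since $p_1 = 1$ makes $r_1$ infinite; this decomposition is the step needing the most care, though it is quite natural.

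From here I would split on whether $R := \sum_{j=2}^n r_j \geq 1$. If $R \geq 1$, Theorem~\ref{thm:odds-bound} applied to the length-$(n-1)$ sequence $(p_2, \dots, p_n)$ gives $v(p_2, \dots, p_n) \geq b_n$. If instead $R < 1$, I would use $r_j = 1/q_j - 1$ to rewrite the condition as $\sum_{j=2}^n 1/q_j < n$; the harmonic mean of $q_2, \dots, q_n$ therefore exceeds $(n-1)/n$, and the HM--GM inequality gives
\begin{equation*}
\left(\prod_{j=2}^n q_j\right)^{1/(n-1)} \geq \frac{n-1}{\sum_{j=2}^n 1/q_j} > \frac{n-1}{n},
\end{equation*}
so $\prod_{j=2}^n q_j > b_n$. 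In either case, $V_n(Y_1, \dots, Y_n) \geq b_n$.

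For attainment, take any V-sequence with $p_2 = p_3 = \dots = p_n = 1/n$ (the particular choice of $a_i$'s and $b_i$'s is irrelevant). Then $r_j = 1/(n-1)$ and $R = 1$, so $\prod_{j=2}^n q_j = ((n-1)/n)^{n-1} = b_n$, and the attainment clause of Theorem~\ref{thm:odds-bound} applied to the length-$(n-1)$ sequence of probabilities $1/n$ gives $v(p_2, \dots, p_n) = b_n$ as well. Both branches of the maximum equal $b_n$, so $V_n(Y_1, \dots, Y_n) = b_n$ is actually achieved.
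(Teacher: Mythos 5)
Your proof is correct and follows essentially the same route as the paper's: reduce to Bruss's last-success problem via the indicators of $\{Y_k=a_k\}$, split at time~1 into ``stop'' versus ``continue,'' apply Theorem~\ref{thm:odds-bound} when $\sum r_j\geq 1$ and a mean inequality on $\prod q_j$ otherwise, and attain the bound with all $p_j=1/n$. Your HM--GM step on the $q_j$'s is algebraically identical to the paper's AM--GM on the $(1+r_j)$'s, and citing the attainment clause of Theorem~\ref{thm:odds-bound} in place of recomputing via Theorem~\ref{thm:Bruss-odds} is an equally valid finish.
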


\begin{proof}
For $k=1,\dots,n-1$, we define the indicator random variables
\begin{equation*}
I_k:=\begin{cases}
1 & \mbox{if }Y_{k+1} > \max\{Y_1,\dots,Y_k\}, \\
0 & \mbox{otherwise}.
\end{cases}
\end{equation*}
Let $p_k:=E(I_k)=P(Y_{k+1}=a_{k+1})$, $q_k:=1-p_k$ and $r_k:=p_k/q_k$. We have that 
\begin{equation}
V_n(Y_1,\dots,Y_n)=\max\{U_1(a_1),W_1(a_1)\}. 
\label{eq:stop-or-continue}
\end{equation}
Notice that if we decide to continue after observing $Y_1$, we win if and only if we choose the last success in the sequence $I_1,\dots,I_{n-1}$, which is an independent sequence since for each $k\geq 1$, $I_k=1$ if and only if $Y_{k+1}=a_{k+1}$. Thus, $W_1(a_1)=v(p_{1},\dots,p_{n-1})$, so if $\sum_{k=1}^{n-1} r_k \geq 1$, Theorem \ref{thm:odds-bound} gives
\begin{equation*}
W_1(a_1)\geq \left(1-\frac{1}{n}\right)^{n-1}.
\end{equation*}
Suppose now that $\sum_{k=1}^{n-1} r_k \leq 1$. Stopping at the first observation gives us win probability
\begin{equation*}
U_1(a_1)=q_1\cdots q_{n-1}=\prod_{i=1}^{n-1}(1+r_i)^{-1}.
\end{equation*}
Using the arithmetic mean-geometric mean inequality we have
\begin{align*}
\left(\prod_{i=1}^{n-1}(1+r_i) \right)^{\frac{1}{n-1}} \leq \frac{ \sum_{k=1}^{n-1} (1+r_k )}{n-1} \leq \frac{n}{n-1}.
\end{align*}
Thus,
\begin{equation*}
U_1(a_1)\geq \left(\frac{n}{n-1}\right)^{-(n-1)}=\left(1-\frac{1}{n}\right)^{n-1}.
\end{equation*}
In both cases, \eqref{eq:stop-or-continue} yields the desired lower bound.

To see that the bound is attained, let $p_i:=\sP(Y_{i+1}=a_{i+1})=1/n$, $q_i:=1-p_i$ and $r_i:=p_i/q_i=1/(n-1)$, for $i=1,\dots,n-1$. If we skip the first observation and play optimally from then on, we win if and only if we stop at the last success of $I_1,\dots,I_{n-1}$, that is, $W_1(a_1)=v(p_{1},\dots,p_{n-1})$. Note that $\inf\{s:\sum_{j=s}^n r_j\geq 1\}=2$. Theorem \ref{thm:Bruss-odds} gives  
\begin{equation*}
v(p_{1},\dots,p_{n-1})=\sum_{j=2}^n \frac{1}{n} \left(\frac{n-1}{n}\right)^{n-2}= \left(1-\frac{1}{n}\right)^{n-1}.
\end{equation*}
If, on the other hand, we choose to stop at the first observation, the win probability is $U_1(a_1)=q_1\cdots q_{n-1}=\left(1-\frac{1}{n}\right)^{n-1}$ as well. Hence
\begin{equation*}
V_n(Y_1,\dots,Y_n)=\left(1-\frac{1}{n}\right)^{n-1},
\end{equation*}
as we wanted to show.
\end{proof}

\begin{lemma} \label{lem:critical-values}
For the optimal stopping problem \eqref{eq:optimal-stopping-problem}, there exists a sequence of critical values $x_1^*,\dots,x_{n-1}^*$ such that at observation $1\leq k <n$ it is optimal to stop if and only $X_k\geq x_k^*$ and $X_k$ is a candidate. In other words, the optimal stopping rule is
\begin{equation*}
\tau^*:=\min\big\{k\leq n: X_k\geq\max\{X_1,\dots,X_{k-1},x_k^*\}\big\},
\end{equation*}
or $\tau^*=n$ if no such $k$ exists.
\end{lemma}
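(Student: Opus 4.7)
The plan is to invoke the standard dynamic programming principle for finite-horizon optimal stopping, together with monotonicity properties of the functions $U_k$ and $W_k$ introduced just before the lemma. At observation $k<n$, given the history $X_1,\dots,X_k$, write $m':=\max\{X_1,\dots,X_{k-1}\}$ (so $X_k$ is a candidate iff $X_k\geq m'$). By independence of $X_{k+1},\dots,X_n$ from the past, stopping at time $k$ yields conditional win probability $U_k(X_k)\mathbf{1}_{\{X_k\geq m'\}}$, while optimal continuation yields $W_k(\max\{m',X_k\})$. In the non-candidate case $X_k<m'$, the stopping value is $0$ so continuing is (weakly) optimal. In the candidate case $X_k\geq m'$, we have $\max\{m',X_k\}=X_k$, and the decision reduces to comparing $U_k(X_k)$ with $W_k(X_k)$, a comparison that depends on $X_k$ alone and not on $m'$.

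The key observation is that the set of $x$ for which stopping is optimal forms a right half-line. Indeed, $U_k(x)=\prod_{j=k+1}^n F_j(x)$ is manifestly non-decreasing in $x$. Moreover, $W_k$ is non-increasing in $m$: directly from its definition,
$$W_k(m)=\sup_{k<\tau\leq n}\sP\bigl(X_\tau\geq\max\{m,X_{k+1},\dots,X_n\}\bigr),$$
and if $m_1\leq m_2$ then for every stopping time $\tau$ the event $\{X_\tau\geq\max\{m_2,X_{k+1},\dots,X_n\}\}$ is contained in the corresponding event with $m_2$ replaced by $m_1$; taking suprema yields $W_k(m_2)\leq W_k(m_1)$. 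Hence $D_k(x):=U_k(x)-W_k(x)$ is non-decreasing, so $\{x\in\RR:D_k(x)\geq 0\}$ is upward-closed. Defining
$$x_k^*:=\inf\{x\in\RR:U_k(x)\geq W_k(x)\}$$
(with $\inf\emptyset:=+\infty$), we conclude that, given that $X_k$ is a candidate, stopping at time $k$ is optimal iff $X_k\geq x_k^*$.

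I expect no serious obstacle here. The main point to verify is the monotonicity of $W_k$, and the argument above bypasses any induction by working straight from the definition. Behavior exactly at the threshold $x_k^*$ is immaterial since $X_k$ has a continuous distribution and hence $\sP(X_k=x_k^*)=0$. Combining the candidate and non-candidate cases yields the stopping rule $\tau^*$ stated in the lemma.
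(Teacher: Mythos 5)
Your dynamic-programming reduction (comparing the stopping value $U_k(X_k)$ with the continuation value $W_k(X_k)$ in the candidate case, and noting that continuing dominates when $X_k$ is not a candidate) and your monotonicity argument showing that $\{x\in\RR: U_k(x)\geq W_k(x)\}$ is upward-closed are both correct, and they make explicit a step the paper leaves implicit. The core of your argument --- $U_k$ non-decreasing, $W_k$ non-increasing, hence a threshold rule --- is the same as the paper's.

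However, you have omitted what is in fact the bulk of the paper's proof: showing that the threshold is a \emph{finite} real number. Your definition $x_k^*:=\inf\{x:U_k(x)\geq W_k(x)\}$ with $\inf\emptyset:=+\infty$ leaves open both $x_k^*=+\infty$ (the set empty) and $x_k^*=-\infty$ (stopping optimal for every candidate value, in which case no real threshold describes the rule if $X_k$ is unbounded below). The paper excludes both possibilities by computing limits: $U_i(x)=\prod_{j>i}F_j(x)$ tends to $0$ as $x\to-\infty$ and to $1$ as $x\to+\infty$, while $\lim_{x\to-\infty}W_i(x)>0$ (choose $k$ with $\sP(X_k=\max\{X_{i+1},\dots,X_n\})>0$ and consider the rule $\tau\equiv k$) and $\lim_{x\to+\infty}W_i(x)<1$ (since $W_i(x)\leq \sP(\max\{X_{i+1},\dots,X_n\}\geq x)$, which tends to $0$). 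Hence $U_i<W_i$ near $-\infty$ and $U_i>W_i$ near $+\infty$, so $x_i^*\in\RR$. This is not a cosmetic point: the later lemmas use $x_1^*$ as an honest real number (the reduction lemma sets $X_1'\equiv x_1^*$, and Lemma \ref{lem:using-IVT} produces constants $c_i>x_1^*$). Adding these two limit computations closes the gap; the rest of your argument stands.
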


\begin{proof}
For each $i=1,\dots,n-1$, $U_{i}(x)$ is continuous and increasing, since it is the product of continuous and increasing functions. Moreover, $\lim_{x\to-\infty}U_i(x)=0$ and $\lim_{x\to\infty}U_i(x)=1$. For each $i=1,\dots,n-1$, we have 
\begin{equation*}
W_{i}(x)=\sup_{i<\tau \leq n }P\left(X_{\tau}\geq \max\{x,X_{i+1},\dots,X_n\} \right).
\end{equation*}
This shows that $W_i$ is continuous and decreasing. We claim that
\begin{equation}
\lim_{x\to-\infty}W_i(x)>0
\label{eq:W-at-left}
\end{equation}
and
\begin{equation}
\lim_{x\to\infty}W_i(x)<1.
\label{eq:W-at-right}
\end{equation}
It then follows that the graphs of $U_i$ and $W_i$ intersect each other at some (not necessarily unique) point $x_i^*$. 

To see \eqref{eq:W-at-left}, choose $k\in\{i+1,\dots,n\}$ such that $P(X_k=\max\{X_{i+1},\dots,X_n\})>0$. Then
\begin{align*}
L:&=\lim_{x\to-\infty} P(X_k\geq x, X_k=\max\{X_{i+1},\dots,X_n\})\\
&= P(X_k=\max\{X_{i+1},\dots,X_n\})\\
&>0,
\end{align*}
so, by considering the stopping rule $\tau\equiv k$,
\begin{align*}
W_i(x)%&=\sup_{i<\tau\leq n} P(X_\tau\geq\max\{x,X_{i+1},\dots,X_n\})\\
&\geq P(X_k\geq\max\{x,X_{i+1},\dots,X_n\})\\
&=P(X_k\geq x, X_k=\max\{X_{i+1},\dots,X_n\})\\
&\to L>0, \qquad\mbox{as $x\to-\infty$}.
\end{align*}
This gives \eqref{eq:W-at-left}. Next, choose $x$ such that $P(\max\{X_{i+1},\dots,X_n\}\geq x)<1$. Then
\begin{equation*}
W_i(x)\leq \sup_{i<\tau\leq n} P(X_\tau\geq x)\leq P(\max\{X_{i+1},\dots,X_n\}\geq x)<1,
\end{equation*}
which implies \eqref{eq:W-at-right}.
\end{proof}

When $X_1,\dots,X_n$ are i.i.d., the critical values $x_1^*,\dots,x_{n-1}^*$ form a decreasing sequence; see \cite{GilMost}. This remains the case if $X_1,\dots,X_n$ are merely assumed to be independent.

\begin{lemma} \label{lem:decreasing-critical-values}
We have $x_1^*\geq x_2^*\geq \dots \geq x_{n-1}^*$.
\end{lemma}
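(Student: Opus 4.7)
The plan is to establish, for each $1\leq k\leq n-2$, the pointwise inequality $W_k(x_{k+1}^*)\geq U_k(x_{k+1}^*)$; combined with a suitable canonical choice of $x_k^*$, this will force $x_k^*\geq x_{k+1}^*$ immediately.

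First, I would sharpen the conclusion of Lemma \ref{lem:critical-values}. Since $U_k$ is continuous and nondecreasing while $W_k$ is continuous and nonincreasing, the function $W_k-U_k$ is continuous and nonincreasing on $\RR$, and by \eqref{eq:W-at-left}--\eqref{eq:W-at-right} it takes both positive and negative values. Hence one can take the canonical choice
\begin{equation*}
x_k^*:=\inf\{x\in\RR:U_k(x)\geq W_k(x)\},
\end{equation*}
and with this choice $U_k(x)\geq W_k(x)$ if and only if $x\geq x_k^*$ (with equality at $x_k^*$ itself by continuity).

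Next I would exploit the factorization $U_k(m)=F_{k+1}(m)\,U_{k+1}(m)$ together with the recursion \eqref{eq:W-recursion} to obtain
\begin{equation*}
W_k(m)-U_k(m)=F_{k+1}(m)\bigl[W_{k+1}(m)-U_{k+1}(m)\bigr]+\int_m^{\infty}\max\{U_{k+1}(y),W_{k+1}(y)\}\,dF_{k+1}(y).
\end{equation*}
Evaluating this identity at $m=x_{k+1}^*$, the bracketed difference is zero by the definition of $x_{k+1}^*$ and continuity, while the integral is manifestly nonnegative. Therefore $W_k(x_{k+1}^*)\geq U_k(x_{k+1}^*)$, and the characterization of $x_k^*$ above yields $x_{k+1}^*\leq x_k^*$, as desired.

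I do not anticipate a real obstacle; the argument reduces the monotonicity claim to a one-line verification using the dynamic programming recursion. The only mild subtlety is choosing $x_k^*$ canonically (the critical value need not be unique in general) so that the equivalence "$U_k(x)\geq W_k(x)\iff x\geq x_k^*$" is available without further qualification.
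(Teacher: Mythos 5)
Your core computation is exactly the paper's: the paper proves the pointwise implication $W_{k+1}(x)\geq U_{k+1}(x)\Rightarrow W_k(x)\geq U_k(x)$ by combining the recursion \eqref{eq:W-recursion} with the factorization $U_k(x)=F_{k+1}(x)U_{k+1}(x)$ and simply discarding the nonnegative integral term, which is precisely the content of your displayed identity. So the key lemma and the mechanism are the same.

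However, your final deduction has a gap caused by your choice of canonical critical value. You set $x_k^*:=\inf\{x:U_k(x)\geq W_k(x)\}$, so your characterization reads ``$U_k(x)\geq W_k(x)\iff x\geq x_k^*$.'' From this, the inequality $W_k(x_{k+1}^*)\geq U_k(x_{k+1}^*)$ yields $x_{k+1}^*\leq x_k^*$ only when it is \emph{strict}: if $W_k(x_{k+1}^*)=U_k(x_{k+1}^*)$, then $x_{k+1}^*$ lies in the set $\{U_k\geq W_k\}=[x_k^*,\infty)$, which is consistent with $x_{k+1}^*>x_k^*$. Equality is a real possibility here, not a vacuous edge case: your identity shows $W_k(x_{k+1}^*)-U_k(x_{k+1}^*)=\int_{x_{k+1}^*}^{\infty}\max\{U_{k+1},W_{k+1}\}\,dF_{k+1}$, which vanishes for instance when $X_{k+1}$ is supported below $x_{k+1}^*$; and $W_k-U_k$ can be identically zero on a nondegenerate interval when the supports of the $X_i$ are disjoint, exactly as in the extremal example the paper constructs at the end. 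The fix is one line: take instead $x_k^*:=\sup\{x:W_k(x)\geq U_k(x)\}$, so that $W_k(x)\geq U_k(x)\iff x\leq x_k^*$. Continuity still gives $W_{k+1}(x_{k+1}^*)=U_{k+1}(x_{k+1}^*)$, your identity still gives $W_k(x_{k+1}^*)\geq U_k(x_{k+1}^*)$, and now this reads directly as $x_{k+1}^*\leq x_k^*$. Equivalently, you can avoid naming a canonical value altogether and conclude, as the paper implicitly does, from the set inclusion $\{x:W_{k+1}(x)\geq U_{k+1}(x)\}\subseteq\{x:W_k(x)\geq U_k(x)\}$ between left half-lines.
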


\begin{proof}
It suffices to show that
\begin{equation*}
W_{k+1}(x)\geq U_{k+1}(x) \quad \Longrightarrow \quad W_k(x)\geq U_k(x), \qquad x\in\RR, \quad k=1,\dots,n-2.
\end{equation*}
But this is almost obvious, since assuming the inequality on the left, \eqref{eq:W-recursion} gives 
\begin{equation*}
W_k(x)\geq F_{k+1}(x)W_{k+1}(x)\geq F_{k+1}(x)U_{k+1}(x)=U_k(x),
\end{equation*}
where the last equality is a consequence of \eqref{eq:U-def}.
\end{proof}

\begin{lemma} \label{lem:using-IVT}
For each $i\in\{2,3,\dots,n-1\}$, there exists a constant $c_i>x_1^*$ such that
\begin{equation}
\int_{x_1^*}^{\infty} U_{i}(x)dF_{i}(x) = U_{i}(c_{i})P(X_{i}>x_1^*).
\label{eq:intermediate-value}
\end{equation}
\end{lemma}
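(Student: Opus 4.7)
The identity says that the conditional mean of $U_i(X_i)$ given $X_i > x_1^*$ equals $U_i(c_i)$ for some $c_i > x_1^*$, so the natural approach is to apply the intermediate value theorem to the continuous function $U_i$. My plan is to set $\mu := \sP(X_i > x_1^*)$, dispose of the trivial case $\mu = 0$ (both sides of \eqref{eq:intermediate-value} vanish, so any $c_i > x_1^*$ works), and in the main case $\mu > 0$ to work with the conditional mean
\[
\bar U := \frac{1}{\mu}\int_{x_1^*}^{\infty} U_i(x)\,dF_i(x),
\]
so that the lemma reduces to finding $c_i > x_1^*$ with $U_i(c_i) = \bar U$.

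From the proof of Lemma \ref{lem:critical-values} I already have that $U_i$ is continuous and non-decreasing on $\RR$ with $\lim_{x\to\infty}U_i(x)=1$. Monotonicity gives $\bar U \geq U_i(x_1^*)$, while $U_i \leq 1$ yields $\bar U \leq 1$. In the generic case $U_i(x_1^*) < \bar U < 1$, applying the intermediate value theorem on $[x_1^*, M]$ for $M$ large enough that $U_i(M) > \bar U$ will deliver $c_i \in (x_1^*, M]$ with $U_i(c_i) = \bar U$; the strict inequality $c_i > x_1^*$ then follows from $U_i(x_1^*) < \bar U$ together with monotonicity.

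The main obstacle will be the boundary case $\bar U \in \{U_i(x_1^*), 1\}$, where IVT on its own could return $c_i = x_1^*$ or no finite $c_i$ at all. I plan to handle this by exploiting the integral identity directly: if $\bar U = U_i(x_1^*)$, then $\int_{x_1^*}^\infty \bigl(U_i(x) - U_i(x_1^*)\bigr)\, dF_i(x) = 0$ with a non-negative integrand, forcing $U_i(x) = U_i(x_1^*)$ for $F_i$-a.e. $x > x_1^*$; since $\mu > 0$, this level set must meet $(x_1^*,\infty)$, and any of its points serves as $c_i$. The case $\bar U = 1$ is analogous: $U_i$ must attain the value $1$ on a set of positive $F_i$-measure inside $(x_1^*,\infty)$ (otherwise the integral defining $\bar U$ would be strictly less than $\mu$), and any such point exceeds $x_1^*$ because it lies in $(x_1^*,\infty)$ by construction. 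Once these degenerate configurations are settled, the three cases combine to yield $c_i > x_1^*$ satisfying \eqref{eq:intermediate-value} in every situation.
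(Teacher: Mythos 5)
Your proposal is correct and follows essentially the same route as the paper: dispose of the trivial case $\sP(X_i>x_1^*)=0$, sandwich the conditional mean $\bar U$ between $U_i(x_1^*)$ and $1$ using monotonicity of $U_i$, and apply the intermediate value theorem, with the degenerate endpoint configurations handled via the vanishing of an integral with nonnegative integrand. If anything, you treat the boundary case $\bar U=1$ more explicitly than the paper does, which folds it into the IVT step; this is a harmless refinement rather than a different method.
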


\begin{proof}
If $\rP(X_i>x_1^*)=0$, we can choose $c_i>x_1^*$  arbitrarily, and both sides of \eqref{eq:intermediate-value} will be zero. So assume $\rP(X_i>x_1^*)>0$. Since $U_i$ is increasing and bounded by $1$, we have
\begin{align*}
U_{i}(x_1^*)P(X_{i}>x_1^*)\leq \int_{x_1^*}^{\infty} U_{i}(x)dF_{i}(x) \leq P(X_{i}>x_1^*),
\end{align*}
so
\begin{equation*}
U_{i}(x_1^*) \leq \frac{1}{P(X_{i}>x_1^*)}\int_{x_1^*}^{\infty} U_{i}(x)dF_{i}(x) \leq 1.
\end{equation*}
If the inequality on the left is strict, then, since $U_i(x)$ is continuous and tends to $1$ as $x \rightarrow \infty$, we can apply the intermediate value theorem to obtain $c_{i}>x_1^*$ such that
\begin{equation*}
U_{i}(c_{i})=\frac{1}{P(X_{i}>x_1^*)}\int_{x_1^*}^{\infty} U_{i}(x)dF_{i}(x).
\end{equation*}
Otherwise, it must be the case that $U_i$ is constant (and hence equal to $1$) on $[x_1^*,\infty)$, and we can choose $c_i$ to be any number strictly greater than $x_1^*$. In either case, we get \eqref{eq:intermediate-value}.
\end{proof}

\begin{lemma}[Reduction lemma] \label{lem:reduction}
%Suppose that $P(X_i>x_1^*)>0$ for each $i=2,\dots,n-1$. Then 
There exists a V-sequence $X_1^{'},\dots,X_n^{'}$ such that $V_n(X_1,\dots,X_n)\geq V_n(X_1^{'},\dots,X_n^{'})$.
\end{lemma}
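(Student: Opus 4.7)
The plan is to set $a_1 := x_1^*$ (the critical value from Lemma~\ref{lem:critical-values}) and $X_1' := a_1$ deterministically, and for each $i \geq 2$ to define $X_i'$ as a two-point random variable taking the value $a_i$ with probability $p_i := P(X_i > x_1^*)$ and the value $b_i$ with probability $q_i := 1 - p_i$. The atoms $a_2 < \cdots < a_n$ are chosen strictly above $a_1$ and $b_2 > \cdots > b_n$ strictly below $a_1$; the specific numerical values of the $a_i$'s and $b_i$'s need not matter beyond these ordering constraints (as in the proof of Lemma~\ref{lem:V-sequence-bound}, the value $V_n(X')$ of a V-sequence depends only on the $p_i$'s), though the natural choice dictated by Lemma~\ref{lem:using-IVT} is $a_i := c_i$.

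The proof of $V_n(X) \geq V_n(X')$ proceeds in two stages. First, I would reduce to the case $X_1 \equiv x_1^*$ via the key monotonicity observation that, since $U_1$ is increasing and $W_1$ is decreasing with $U_1(x_1^*) = W_1(x_1^*)$, one has $\max\{U_1(x), W_1(x)\} \geq U_1(x_1^*)$ for every $x \in \mathbb{R}$. Taking expectations yields
\[
V_n(X_1, \dots, X_n) = E\bigl[\max\{U_1(X_1), W_1(X_1)\}\bigr] \geq U_1(x_1^*) = V_n(x_1^*, X_2, \dots, X_n).
\]
Second, I would iteratively replace each $X_i$ (for $i = 2, \dots, n$) by $X_i'$, one index at a time, using the recursion \eqref{eq:W-recursion}. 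The core identity is that the continuation integral $\int_{x_1^*}^\infty U_i(x)\,dF_i(x)$ equals $p_i\, U_i(c_i)$ by Lemma~\ref{lem:using-IVT}, which matches the contribution from the atom of $X_i'$ at $a_i = c_i$; meanwhile, on $[x_1^*,\infty)$ we have $\max\{U_i, W_i\} = U_i$ by Lemma~\ref{lem:decreasing-critical-values} (since $x_i^* \leq x_1^*$), so a candidate above $x_1^*$ always triggers a ``stop'' decision. This makes the substitution along the ``all previous observations $\leq x_1^*$'' branch preserve the value.

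The main obstacle is that the iterative substitution must also behave correctly on the branches where some earlier $X_j'$ ($j < i$) has realized its upper value $a_j > x_1^*$, since on such branches the relevant threshold at time $i$ exceeds $x_1^*$ and Lemma~\ref{lem:using-IVT} does not directly match the local integral. I expect to resolve this by exploiting the freedom to choose the $a_i$'s (which do not affect $V_n(X')$ for a V-sequence) together with Lemma~\ref{lem:decreasing-critical-values}, which guarantees that once a candidate above $x_1^*$ appears, the optimal policy is to stop immediately; hence the ``upper branches'' contribute the same quantity $U_j(a_j)$ on both sides of the reduction, and only the ``lower branch'' actually requires matching via Lemma~\ref{lem:using-IVT}. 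This localizes the substitution argument to a single branch and lets the reduction go through cleanly.
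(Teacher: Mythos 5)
Your construction (setting $X_1'\equiv x_1^*$ and letting $X_i'$ be a two-point variable recording whether $X_i>x_1^*$) and your first step (replacing $X_1$ by the constant $x_1^*$ via the monotonicity of $U_1$ and $W_1$ and the equality $U_1(x_1^*)=W_1(x_1^*)$) are exactly the paper's, and you have correctly located the hard part: the branches on which some earlier observation already exceeds $x_1^*$. But your resolution of that obstacle has a genuine gap. The claim that ``the upper branches contribute the same quantity $U_j(a_j)$ on both sides of the reduction'' is false: if you stop at a candidate $a_j>x_1^*$, the win probability is $\prod_{i>j}\sP(Y_i\leq a_j)$ computed in whichever hybrid sequence you are currently in, and replacing $X_{k+1}$ by $X_{k+1}'$ changes the factor $\sP(X_{k+1}\leq a_j)$ into $\sP(X_{k+1}'\leq a_j)=\sP(X_{k+1}\leq x_1^*)$. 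These are not equal; fortunately the inequality goes the way you need (this is the paper's \eqref{eq:U-comparison}), but it must be stated and used as an inequality.

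More seriously, you invoke Lemma \ref{lem:decreasing-critical-values} to conclude that a candidate above $x_1^*$ ``always triggers a stop.'' That lemma concerns the \emph{original} sequence; in the middle of the iterative replacement you need $U_{j+1}^{(k)}(a_{j+1})\geq W_{j+1}^{(k)}(a_{j+1})$ for the \emph{hybrid} sequence $X_1',\dots,X_k',X_{k+1},\dots,X_n$. Tracing this back via monotonicity reduces it to $W_{j+1}^{(j+1)}(x_1^*)\geq W_{j+1}^{(k)}(x_1^*)$, which is precisely a chained instance of the comparison $W_{j+1}^{(k)}(x_1^*)\geq W_{j+1}^{(k+1)}(x_1^*)$ that you are in the course of proving --- a circularity your sketch does not break. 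The paper breaks it by a backward induction on $j$, in which the ``stop at the top value'' inequality \eqref{eq:stop-if-top-value} is \emph{derived} from the induction hypothesis \eqref{eq:induction-hypothesis} rather than assumed; some such induction is indispensable for your branch-by-branch argument to close. (A minor further point: you need $a_2<\dots<a_n$ \emph{and} $a_j\leq c_j$ simultaneously, and the $c_j$ need not be increasing, so $a_j:=c_j$ may violate the ordering; choose the $a_j$ increasing within $(x_1^*,\min_j c_j]$ instead.)
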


\begin{proof}
Let $c_2,\dots,c_{n-1}$ be the constants from Lemma \ref{lem:using-IVT}.
Choose numbers $a_2\dots,a_n$ and $b_2,\dots,b_{n}$ such that $x_1^*<a_2<a_3<\dots<a_n$ and $a_j\leq c_j$ for $j=2,\dots,n-1$, and $x_1^*>b_2>\dots>b_{n}$. Set $X_1^{'}:\equiv x_1^*$ and for $2\leq i \leq n$, define
\begin{equation*}
X_i^{'}:=\begin{cases}
a_i, & \mbox{if}\ X_i > x_1^*\\
b_i, & \mbox{if}\ X_i \leq x_1^*.
\end{cases}
\end{equation*}
Note that the random variables $X_1',\dots,X_n'$ are independent. The idea of the proof is to replace the random variables $X_1,\dots,X_n$ by their counterparts $X_1',\dots,X_n'$ one at a time, starting with $X_1$, and to show that each such replacement does not increase the optimal win probability. In order to do so, we need to introduce analogs of the functions $U_j$ and $W_j$ for the sequence $X_1^{'},\dots,X_k^{'},X_{k+1},\dots,X_n$, where $k=0,1,\dots,n$. First, introduce random variables
\begin{equation*}
Y_j^{(k)}:=\begin{cases}
X_j' & \mbox{if $1\leq j\leq k$},\\
X_j  & \mbox{if $k<j\leq n$},
\end{cases}
\qquad k=0,1,\dots,n, \quad j=1,2,\dots,n.
\end{equation*}
Now define
\begin{gather*}
U_j^{(k)}(x):=\sP(Y_{j+1}^{(k)}\leq x,\dots,Y_n^{(k)}\leq x)=\sP(Y_{j+1}^{(k)}\leq x)\cdots\sP(Y_n^{(k)}\leq x),\\
W_j^{(k)}(x):=\sup_{j<\tau\leq n} \sP\left(Y_\tau^{(k)}=\max\{x,Y_{j+1}^{(k)},\dots,Y_n^{(k)}\}\right),
\end{gather*}
for $k=0,1,\dots,n$ and $j=1,2,\dots,n$. We also write
\begin{equation*}
V_n^{(k)}:=V_n\left(Y_1^{(k)},\dots,Y_n^{(k)}\right), \qquad k=0,1,\dots,n,
\end{equation*}
so $V_n^{(0)}=V_n(X_1,\dots,X_n)$. Observe that
\begin{equation}
k\leq j \quad \Longrightarrow \quad W_j^{(k)}=W_j\ \mbox{and}\ U_j^{(k)}=U_j.
\label{eq:drop-the-superscript}
\end{equation}
And since 
\begin{equation*}
\rP(X_i'\leq x_1^*)=\sP(X_i'=b_i)=\sP(X_i\leq x_1^*), \qquad i=2,3,\dots,n,
\end{equation*}
we also have
\begin{equation}
U_j^{(k)}(x_1^*)=U_j(x_1^*), \qquad j=1,\dots,n-1, \quad k=0,1,\dots,n.
\label{eq:U-not-changing}
\end{equation}
We will show that for all $0\leq k \leq n-1$, $V_n^{(k)}\geq V_n^{(k+1)}$. First,
\begin{align*}
V_n^{(0)}&=V_n(X_1,X_2,\dots,X_n)=E[\max\{U_1(X_1),W_1(X_1)\}] \\
&=\int_{-\infty}^{\infty} \max\{U_1(x),W_1(x)\}dF_1(x) \\
&= \int_{-\infty}^{x_1^*}  W_1(x) dF_1(x)+ \int_{x_1^*}^{\infty} U_1(x) dF_1(x) \\ 
&\geq \int_{-\infty}^{x_1^*} W_1(x_1^*) dF_1(x)+ \int_{x_1^*}^{\infty} U_1(x_1^*) dF_1(x) \\ 
&=U_1(x_1^*)=W_1(x_1^*) \\
&=V_n(X_1^{'},X_2,\dots,X_n)=V_n^{(1)},
\end{align*}
where the inequality follows since $W_1$ is decreasing and $U_1$ is increasing.

Next, we show for every $1 \leq k \leq n-1$ that $V^{(k)}_n\geq  V^{(k+1)}_n$. In view of \eqref{eq:U-not-changing}, this will follow if we prove that
\begin{equation}
W_j^{(k)}(x_1^*)\geq W_j^{(k+1)}(x_1^*), \qquad j\leq k<n,
\label{eq:comparison-array}
\end{equation}
for $j=1,\dots,n-1$; we can then take $j=1$.
We prove \eqref{eq:comparison-array} by backward induction on $j$. First, \eqref{eq:comparison-array} holds for $j=n-1$ since
\begin{equation*}
W_{n-1}^{(n-1)}(x_1^*)=\sP(X_n>x_1^*)=\sP(X_n'>x_1^*)=W_{n-1}^{(n)}(x_1^*).
\end{equation*}
Suppose \eqref{eq:comparison-array} holds with $j$ replaced by $j+1$; that is,
\begin{equation}
W_{j+1}^{(k)}(x_1^*)\geq W_{j+1}^{(k+1)}(x_1^*), \qquad j<k<n.
\label{eq:induction-hypothesis}
\end{equation}
We first claim that
\begin{equation}
U_{j+1}^{(k)}(a_{j+1})\geq W_{j+1}^{(k)}(a_{j+1}), \qquad j<k<n.
\label{eq:stop-if-top-value}
\end{equation}
This follows since $a_{j+1}>x_1^*\geq x_{k+1}^*$, $U_{j+1}^{(k)}$ is increasing and $W_{j+1}^{(k)}$ is decreasing, and so, for $j<k<n$,
\begin{align*}
U_{j+1}^{(k)}(a_{j+1})&\geq U_{j+1}^{(k)}(x_1^*)=U_{j+1}(x_1^*)\geq W_{j+1}(x_1^*)\\
&=W_{j+1}^{(j+1)}(x_1^*)\geq W_{j+1}^{(k)}(x_1^*)\geq W_{j+1}^{(k)}(a_{j+1}),
\end{align*}
where we used \eqref{eq:U-not-changing} in the first equality, \eqref{eq:drop-the-superscript} in the second equality, and the induction hypothesis \eqref{eq:induction-hypothesis} in the third inequality.

We will also need that
\begin{equation}
U_{j+1}^{(k)}(a_{j+1})\geq U_{j+1}^{(k+1)}(a_{j+1}), \qquad j<k<n.
\label{eq:U-comparison}
\end{equation}
To see this, note that
\begin{equation*}
\frac{U_{j+1}^{(k)}(a_{j+1})}{U_{j+1}^{(k+1)}(a_{j+1})}=\frac{\rP(X_{k+1}\leq a_{j+1})}{\rP(X_{k+1}'\leq a_{j+1})},
\end{equation*}
and
\begin{equation*}
\rP(X_{k+1}'\leq a_{j+1})=\sP(X_{k+1}'=b_{k+1})=\sP(X_{k+1}\leq x_1^*)\leq \sP(X_{k+1}\leq a_{j+1}),
\end{equation*}
since $k>j$ implies $a_{k+1}>a_{j+1}$. Now using \eqref{eq:stop-if-top-value}, the induction hypothesis \eqref{eq:induction-hypothesis}, \eqref{eq:U-comparison}, and finally \eqref{eq:stop-if-top-value} again, we obtain for $k>j$,
\begin{align*}
W_j^{(k)}(x_1^*)&=\sP(X_{j+1}'=b_{j+1})W_{j+1}^{(k)}(x_1^*)+\sP(X_{j+1}'=a_{j+1})U_{j+1}^{(k)}(a_{j+1})\\
%&=\sP(X_{j+1}\leq x_1^*)W_{j+1}^{(k)}(x_1^*)+\sP(X_{j+1}>x_1^*)U_{j+1}^{(k)}(a_{j+1})\\
&\geq \sP(X_{j+1}'=b_{j+1})W_{j+1}^{(k+1)}(x_1^*)+\sP(X_{j+1}'=a_{j+1})U_{j+1}^{(k+1)}(a_{j+1})\\
&=W_j^{(k+1)}(x_1^*).
\end{align*}
It remains to verify \eqref{eq:comparison-array} for $k=j$. Here we use \eqref{eq:intermediate-value} and the fact (by our choice of $a_{j+1}$) that $a_{j+1}\leq c_{j+1}$ to obtain
\begin{align*}
W_j^{(j)}(x_1^*)&=W_j(x_1^*)=\sP(X_{j+1}\leq x_1^*)W_{j+1}(x_1^*)+\int_{x_1^*}^\infty U_{j+1}(x)dF_{k+1}(x)\\
&\geq \sP(X_{j+1}\leq x_1^*)W_{j+1}(x_1^*)+\sP(X_{j+1}>x_1^*)U_{j+1}(a_{j+1})\\
&=\sP(X_{j+1}'=b_{j+1})W_{j+1}^{(j+1)}(x_1^*)+\sP(X_{j+1}'=a_{j+1})U_{j+1}^{(j+1)}(a_{j+1})\\
&=W_j^{(j+1)}(x_1^*),
\end{align*}
where we used \eqref{eq:drop-the-superscript} in the third equality, and \eqref{eq:stop-if-top-value} in the last equality.
This completes the backward induction. Finally, setting $j=1$ in \eqref{eq:comparison-array} and using \eqref{eq:U-not-changing} and
\begin{equation*}
V_n^{(k)}=\max\{U_1^{(k)}(x_1^*),W_1^{(k)}(x_1^*)\}, \qquad k=1,2,\dots,n,
\end{equation*}
it follows that $V_n^{(k)}\geq V_n^{(k+1)}$ for $k=1,\dots,n-1$. This proves the lemma.
\end{proof}

\begin{proof}[Proof of Theorem \ref{thm:main}] 
The inequality \eqref{eq:main-lower-bound} is an immediate consequence of Lemma \ref{lem:reduction} and Lemma \ref{lem:V-sequence-bound}.
%We prove a slightly stronger statement, namely that the given lower bound holds for any sequence $X_1,\dots,X_n$ of independent random variables with $X_2,\dots,X_n$ continuous. First, observe that
%\begin{equation}
%V_n(X_1,\dots,X_n)\geq V_n(x_1^*,X_2,\dots,X_n),
%\end{equation}
%so it is enough to show that
%\begin{equation}
%V_n(x_1^*,X_2,\dots,X_n)\geq \left(1-\frac{1}{n}\right)^{n-1}.
%\end{equation}
%The proof proceeds by induction. First, let $n=2$. In this case, $x_1^*$ is simply the median of $X_2$, so $V_2(x_1^*,X_2)=U_1(x_1^*)=P(X_2\leq x_1^*)=1/2$.
%Next, let $n>2$ and assume the statement of the theorem is true for all values up to $n-1$. Given $X_1,\dots,X_{n}$, by Lemma \ref{lem:critical-values} there exist critical values $x_1^*,\dots,x_{n-1}^*$. 
%If for some $1<k\leq n-1$, $P(X_k>x_1^*)=0$, then $X_k$ will never be the maximum of the sequence $(x_1^*,X_2,\dots,X_n)$, so $X_k$ can be ignored and we have 
%\begin{equation}
%V_n(x_1^*,X_2,\dots,X_n)\geq V_{n-1}(x_1^*,X_2,\dots,X_{k-1},X_{k+1},\dots,X_n)\geq b_{n-1},
%\end{equation}
%by the induction hypothesis. 
%Assume then that $P(X_i>x_1^*)>0$, $i=2,\dots,n-1$. 
%Recall from Lemma \ref{lem:decreasing-critical-values} that $x_i^*\leq x_1^*$ for $i=2,\dots,n-1$. Using Lemma \ref{lem:reduction} we obtain a V-sequence $X_1',\dots,X_{n}'$ such that
%\[V_{n}(X_1,\dots,X_n) \geq V_{n}(X_1^{'},\dots,X_{n}^{'}).\]
%Then, a direct use of Lemma \ref{lem:V-sequence-bound} yields
%\[
%V_{n}(X_1^{'},\dots,X_{n}^{'})\geq \left(1-\frac{1}{n}\right)^{n-1}.
%\]
The bound is attained by replacing the V-sequence that attains the bound in Lemma \ref{lem:V-sequence-bound} with a continuous sequence, as follows. 
Let $X_1,\dots,X_n$ be independent random variables such that $X_1$ has the uniform distribution on $(0,1)$,
%Define probability density functions $f_1,\dots,f_n$ by
%\begin{equation*}
%f_1(x):=\begin{cases}
%1, & \mbox{if } 0<x<1, \\
%0, & \mbox{otherwise},
%\end{cases}
%\end{equation*}
and for $i=2,\dots,n$, $X_i$ has density function
\begin{equation*}
f_i(x):=\begin{cases}
\frac{1}{n}, & \mbox{if } i<x<i+1, \\
\frac{n-1}{n}, & \mbox{if } -i<x<-i+1, \\
0, & \mbox{otherwise}.
\end{cases}
\end{equation*}
%Now let $X_1,\dots,X_n$ be independent random variables with densities $f_1,\dots,f_n$, respectively. 
Then, since the supports of $X_1,\dots,X_n$ do not overlap, the optimal win probability $V_n(X_1,\dots,X_n)$ is the same as the optimal win probability for the V-sequence that attains the bound in Lemma \ref{lem:V-sequence-bound}. That is, $V_n(X_1,\dots,X_n)={(1-\frac{1}{n})}^{n-1}$.
\end{proof}

\section*{Acknowledgment}
The authors thank the anonymous referee for several helpful suggestions that led to an improved presentation of the paper.

\footnotesize

\end{document}